\DeclareSymbolFontAlphabet{\mathbb}{AMSb}
\DeclareSymbolFontAlphabet{\mathbbl}{bbold}
\newcommand{\Addresses}{{
  \bigskip
  \footnotesize

  F.~Gironella, \textsc{Rényi Institute of Mathematics, Budapest, Hungary}\par\nopagebreak
  \texttt{fabio.gironella@renyi.hu}

}}
\title{Examples of non-trivial contact mapping classes \\ for overtwisted contact manifolds in all dimensions}
\author{Fabio Gironella}
\date{}
\theoremstyle{plain}
\newtheorem{thm}{Theorem}
\numberwithin{thm}{section} 
\newtheorem{prop}[thm]{Proposition}
\newtheorem{lemma}[thm]{Lemma}
\newtheorem*{ex*}{Example}
\newtheorem*{remark*}{Remark}
\newtheorem*{falseexample*}{Naive reasoning}
\theoremstyle{definition}
\newtheorem{definition}[thm]{Definition}
\newcommand{\nat}{\mathbb{N}}
\newcommand{\integ}{\mathbb{Z}}
\newcommand{\rat}{\mathbb{Q}}
\newcommand{\real}{\mathbb{R}}
\newcommand{\compl}{\mathbb{C}}
\newcommand{\fieldk}{\mathbbl{k}}
\newcommand{\cercle}{\mathbb{S}^1}
\newcommand{\sphere}{\mathbb{S}^3}
\newcommand{\torus}{\mathbb{T}^2}
\newcommand{\realm}{\real^m}
\newcommand{\realmplusone}{\real^{m+1}}
\def\co{\colon\thinspace}
\newcommand{\CP}{\mathbb{CP}}
\DeclareMathOperator{\PD}{PD}
\DeclareMathOperator{\Id}{Id}
\DeclareMathOperator{\Hom}{Hom}
\DeclareMathOperator{\ator}{ator}
\newcommand{\homol}[2]{H_{#1}(#2;\integ)}
\newcommand{\homolM}[1]{\homol{#1}{M}}
\newcommand{\homolV}[1]{\homol{#1}{V}}
\newcommand{\cohomol}[2]{H^{#1}(#2;\integ)}
\newcommand{\cohomolV}[1]{\cohomol{#1}{V}}
\newcommand{\cohomolVator}{H^2_{\ator}(V;\integ)}
\newcommand{\cohomolVtorusator}{H^2_{\ator}(V\times\torus;\integ)}
\newcommand{\cohomolVSigmagator}{H^2_{\ator}(V\times\Sigma_g;\integ)}
\newcommand{\cohomolTwoV}{\cohomolV{2}}
\newcommand{\chern}{c}
\newcommand{\id}{\text{Id}}
\newcommand{\U}{\mathcal{U}}
\newcommand{\alphaplus}{\alpha_{+}}
\newcommand{\alphaminus}{\alpha_{-}}
\newcommand{\xiplus}{\xi_{+}}
\newcommand{\ximinus}{\xi_{-}}
\newcommand{\partialti}{\partial_{t_{i}}}
\newcommand{\partialthetazero}{\partial_{\theta_{0}}}
\newcommand{\partialthetai}{\partial_{\theta_{i}}}
\newcommand{\lineF}{\Phi}
\newcommand{\lineH}{H}
\newcommand{\lineX}{Y}
\newcommand{\lineFp}{\Psi}
\newcommand{\lineHp}{K}
\newcommand{\lineXp}{Z}
\newcommand{\diff}[1]{\mathcal{D}\left(#1\right)}
\newcommand{\diffV}{\diff{V}}
\newcommand{\diffVxi}{\diff{V,\xi}}
\newcommand{\ContStr}[1]{\Xi\left(#1\right)}
\newcommand{\ContStrV}{\ContStr{V}}
\DeclareMathOperator{\rang}{rk}
\begin{document}

	\maketitle

	\begin{abstract}
		We construct (infinitely many) examples in all dimensions of contactomorphisms of closed overtwisted contact manifolds that are smoothly isotopic but not contact-isotopic to the identity. 
	\end{abstract}	
	
	\section{Introduction}
	\label{SecIntro}
	
	One of the problems in the field of contact topology is to understand the topology of the space of contactomorphisms $\diffVxi$ of a given contact manifold $(V,\xi)$ in comparison with that of the space of diffeomorphisms $\diffV$ of the underlying smooth manifold $V$ or, more specifically, the problem of understanding the map $j_*:\pi_k\left(\diffVxi\right)\rightarrow\pi_k\left(\diffV\right)$ induced by the natural inclusion $j:\diffVxi\rightarrow\diffV$. 
	
	If $\ContStrV$ denotes the space of all the contact structures on $V$, in the case of closed manifolds the natural map $\diffV\rightarrow\ContStrV$ given by $\phi\mapsto \phi_*\xi$ helps to understand the properties of the $j_*$, and shows that the relation between the topology of $\diffVxi$ and that of $\diffV$ is mediated by the topology of $\ContStrV$. 
	Indeed, (the proof of) Gray's theorem implies, modulo a general fibration criterion, that this map is a locally-trivial fibration with fiber $\diffVxi$; see for instance Giroux and Massot \cite{MasGir15} for an explanation of this result or Massot \cite{MasFibrNotes} for a more detailed proof (the reader can also consult Geiges and Gonzalo Perez \cite{GeiGon04} for a proof of the fact that the map is a Serre fibration).
	Then, the exact long sequence of homotopy groups
	$$ \ldots \rightarrow \pi_{k+1}\left(\ContStrV\right)\rightarrow \pi_k\left(\diffVxi\right) \xrightarrow{j_*} \pi_k\left(\diffV\right)\rightarrow \pi_k\left(\ContStrV\right)\rightarrow\ldots $$	 
	associated to the fibration gives a relationship between the topologies of the three spaces $\diffV$, $\diffVxi$ and $\ContStrV$. \\
	
	As far as the $3$-dimensional case is concerned, the availability of classification results for the isotopy classes of tight contact structures on particular $3$-manifolds $V$ gives some explicit results about the lower homotopy groups in the long exact sequence above for these specific manifolds. The reader can consult Geiges and Gonzalo Perez \cite{GeiGon04}, Bourgeois \cite{Bou06}, Ding and Geiges \cite{DinGei10}, Geiges and Klukas \cite{GeiKlu14}, Giroux and Massot \cite{MasGir15} for results on $\pi_1\left(\ContStrV,\xi\right)$ as well as Giroux \cite{Gir01}, Giroux and Massot \cite{MasGir15} for results on $\pi_0 \left(\diffVxi\right)$. \\ 
	The situation in higher dimension is more complicated, due to the lack of classification results. The only results known so far are contained in Bourgeois \cite{Bou06}, Massot and Niederkr\"{u}ger \cite{MasNie16}, Lanzat and Zapolsky \cite{LanZap15}. In the first paper, Bourgeois gives results on some homotopy groups $\pi_k\left(\ContStrV,\xi\right)$, for particular contact manifolds $(V,\xi)$, using tools from contact homology. In \cite{MasNie16}, the authors give examples of contact manifolds $(V,\xi)$ for which $\ker\left(\pi_0\left(\diffVxi\right)\rightarrow\pi_0\left(\diffV\right)\right)$ is non-trivial; these examples rely on constructions in Massot, Niederkr\"{u}ger and Wendl \cite{MNW13}, which we will also use in the following. The last paper, dealing with the non-compact case, contains examples of embeddings of braid groups in the contactomorphism group of contactizations of certain non-compact symplectic manifolds.
	
	All the examples recalled so far are given on \emph{tight} contact manifolds. For the $3$-dimensional case, the dichotomy tight-overtwisted is well known since Eliashberg \cite{Eli89} and plays an important role in the classification results on which the cited examples are based.
	In the higher dimensional case, a clear definition of overtwistedness is given in Borman, Eliashberg and Murphy \cite{BorEliMur15} and according to it the three examples above are also tight.\\

	As far as the class of overtwisted manifolds is concerned, the only result known at the moment is the classification result of the path components of the space of contactomorphisms for all overtwisted contact structures on the $3$-sphere. 
	This result is attributed to Chekanov, according to Eliashberg and Fraser \cite[Remark 4.16]{EliFra09}; a written proof first appeared in the literature in Vogel \cite{Vog16}, where, among other things, the author proves, using $3$-dimensional techniques, that the space of embeddings of overtwisted disks in one of the overtwisted contact structures on $\sphere$ is not path-connected. 
	This gives in particular the first known examples of contactomorphisms of overtwisted $3$-manifolds that are smoothly isotopic but not contact-isotopic to the identity (we recall that, according to Cerf \cite{Cer68}, each orientation-preserving diffeomorphism of the $3$-sphere is smoothly isotopic to the identity).
	
	In this article we give other explicit examples of overtwisted $(V,\xi)$ such that the kernel of $\pi_0\left(\diffVxi\right)\rightarrow\pi_0\left(\diffV\right)$ is non-trivial. Though, we bypass here the problem of understanding the $\pi_0$ of the space of embeddings of overtwisted disks, about which nothing is known so far in high dimensions; the advantage of our approach is then that it gives (infinitely many) examples in each odd dimension.
	
	More precisely, we start by proving the following result:
	\begin{thm}
		\label{ThmExNonTrivContMappClass}
		Consider a closed manifold $W$ of dimension $2n\geq 2$ and let $\xi$ be a co-orientable contact structure on the manifold $V\coloneqq \cercle \times W$. 
		Suppose that the first Chern class $c_1(\xi)\in \cohomolV{2}$ 
		is \emph{toroidal} 
		and that, for each natural $k\geq 2$, the pullback $\pi_k^*\xi$ of $\xi$ via the $k$-fold cover $\pi_k \co \cercle\times W\rightarrow \cercle\times W$ given by $\pi_k(s,p)=(ks,p)$ satisfies 
		$c_1(\pi_k^*\xi)=k \cdot c_1(\xi)$ \emph{modulo} the submodule $\cohomolVator$ of \emph{atoroidal classes}.
		\\
		Then, the contact transformation $f:(\cercle\times W,\pi_k^*\xi)\rightarrow(\cercle\times W,\pi_k^*\xi)$ defined by $f(s,p) = (s+\frac{2\pi}{k},p)$ is smoothly isotopic but not contact-isotopic to the identity.
	\end{thm}
	Recall that a class $c\in\cohomolV{2}$ is called \emph{toroidal} if there is $f\co \torus\rightarrow V$ such that $f^*c \neq 0\in \cohomol{2}{\torus}$, and \emph{atoroidal} otherwise.
	\begin{remark*}
		Theorem \ref{ThmExNonTrivContMappClass} also holds (with similar proof) if one exchanges
		\begin{enumerate}[label=$(\ast)$]
			\item \label{EqHyp1} $c_1(\xi)$ is toroidal and, for each natural $k \geq 2$, $c_1(\pi_k^*\xi)=k \cdot c_1(\xi)\bmod\cohomolVator$,
		\end{enumerate}
		with the condition
		\begin{enumerate}[label=$(\ast')$]
			\item \label{EqHyp2} $c_1(\xi)$ is not torsion and, for each natural $k \geq 2$, $c_1(\pi_k^*\xi)=k \cdot c_1(\xi)$.
		\end{enumerate}
		Notice that $a\in\cohomolTwoV$ is toroidal if and only if $[a]\in
		\faktor{\cohomol{2}{V}}{\cohomolVator}$ is not torsion, because $\cohomol{2}{\torus}\simeq\integ$. In particular, \ref{EqHyp1} is equivalent to 
		$$\text{$c_1(\xi)$ is not torsion modulo $\cohomolVator$ and $c_1(\pi_k^*\xi)=k \cdot c_1(\xi)\bmod\cohomolVator$,}$$
		hence it is just a variation modulo $\cohomolVator$ of \ref{EqHyp2} (and it is not stronger nor weaker than \ref{EqHyp2}).
		\\
		Slightly anticipating what follows, we also point out that the contact structures given in Theorem \ref{ThmExplicitConstr}, Proposition \ref{PropExamplesHPrinciple} and Theorem \ref{ThmExamplesOBDBourgConstr}.\ref{Item1PropExamplesOBDBourgConstr} below actually satisfy both \ref{EqHyp1} and \ref{EqHyp2}; 
		on the other hand, working modulo $\cohomolVator$, i.e. with  \ref{EqHyp1}, is necessary for Theorem \ref{ThmExamplesOBDBourgConstr}.\ref{Item2PropExamplesOBDBourgConstr} .
		\\
		We hence decided to formulate everything in terms of \ref{EqHyp1}, even though \ref{EqHyp2} would give (everywhere but in Theorem \ref{ThmExamplesOBDBourgConstr}.\ref{Item2PropExamplesOBDBourgConstr}) slightly more direct proofs.
	\end{remark*}
	
	We then give, for each natural $n\geq 1$, an infinite number of \emph{explicit} overtwisted contact manifolds $(\cercle \times W^{2n},\xi)$ satisfying the hypothesis of Theorem \ref{ThmExNonTrivContMappClass}: 
	\begin{thm}
		\label{ThmExplicitConstr}
		Let $(M^{2n-1},\alphaplus,\alphaminus)$ be one of the infinitely many Liouville pairs constructed in Massot, Niederkr\"{u}ger and Wendl \cite{MNW13}. 
		Consider the (co-orientable) contact structure $\eta = \ker\left(\frac{1+\cos\left(s\right)}{2}\alphaplus + \frac{1-\cos\left(s\right)}{2}\alphaminus + \sin\left(s\right)dt\right)$ on the manifold $V\coloneqq \torus_{(s,t)}\times M$ (here, the notation $\torus_{(s,t)}$ denotes the choice of coordinates $(s,t)$ on $\torus$) and denote by $\xi$ the overtwisted contact structure obtained from $\eta$ via a half Lutz-Mori twist along $\{(0,0)\}\times M$, as defined in Massot, Niederkr\"{u}ger and Wendl \cite{MNW13}. 
		\\
		Then, $c_1(\xi)\in\cohomolV{2}$ is 
		toroidal 
		and, for each natural $k\geq2$, we have $c_1(\pi_k^*\xi)=k \cdot c_1(\xi) \bmod \cohomolVator$, where $\pi_k \co \torus_{(s,t)}\times M\rightarrow \torus_{(s,t)}\times M$ is given by $\pi_k(s,t,q)=(ks,t,q)$.
	\end{thm}
	
	\begin{ex*}
		If $n=3$, $(M,\alpha_{\pm})= (\cercle,\pm d\theta)$. Moreover, if $k=2$, the contact structure $\pi_2^*\xi$ on $V:=\torus\times M$ is the unique (up to isotopy) contact structure which is 
		invariant by the left-action by multiplication of $M=\cercle$ on $V$, invariant by the $f(s,t,\theta)=(s+\pi,t,\theta)$ defined in the statement and such that each torus $\torus_{(s,t)}\times \{\theta_0\}$ is convex with dividing set as in Figure \ref{FigDivSet}. 
		Theorems \ref{ThmExplicitConstr} and \ref{ThmExNonTrivContMappClass} then say that $f$ is not contact-isotopic to the identity;
		to our knowledge, even in this simple and very explicit setting, there is no trace of this result in the literature. 
	\end{ex*}
	
	\begin{figure}
		\centering{
			\def\svgwidth{150pt}
			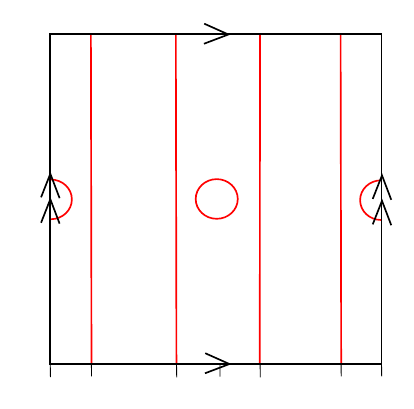
			\caption{Dividing set, in red, on the torus $\torus_{(s,t)}\times\{\theta_0\}$.}} 
		\label{FigDivSet}
	\end{figure}
	
	If one is just interested in giving examples of non$-$trivial elements in the kernel of the map $\pi_0\left(\diffVxi\right)\rightarrow\pi_0\left(\diffV\right)$ in each odd dimension, without wanting the underlying overtwisted contact manifolds $(V,\xi)$
	to be as explicit as those from Theorem \ref{ThmExplicitConstr}, the following result can also be proven using the existence of adapted open book decompositions proven by Giroux \cite{Gir01}:
	\begin{thm}
		\label{ThmExamplesOBDBourgConstr}
		Consider $W$ a closed $2n-$dimensional manifold and $\eta$ a co-orientable overtwisted contact structure on $V\coloneqq \cercle\times W$.
		Suppose that
		$c_1(\eta)$ is 
		toroidal  
		and that, for each $k\geq 2$, the pullback of $\eta$ via the $k$-fold covering $\pi_k \co V\rightarrow V$, given by $\pi_k(s,p)=(ks,p)$, satisfies $c_1(\pi_k^*\eta)=k \cdot c_1(\eta) \bmod \cohomolVator$.
		Then: 
		\begin{enumerate}[label=\roman*.]
			\item \label{Item1PropExamplesOBDBourgConstr} Each contact structure $\xi$ on $V\times\torus$ obtained via the Bourgeois construction \cite{Bou02} from $(V,\eta)$ (is co-orientable and) has first Chern class also satisfying the above conditions, with respect to the covering $\mu_k\coloneqq(\pi_k,\Id) \co V\times \torus\rightarrow V\times\torus$.
			\item \label{Item2PropExamplesOBDBourgConstr} Let $\nu\co V \times \Sigma_g \rightarrow V \times \torus$ be induced by a covering $\Sigma_g\rightarrow \torus$ branched over two points (here, $\Sigma_g$ denotes the closed surface of genus $g\geq 2$).
			Then, every contact branched covering $\xi_g$ of $\xi$ on $V \times \Sigma_g$ (is co-orientable and) has first Chern class satisfying the above conditions, with respect to the covering $\mu_k^g\coloneqq(\pi_k,\Id) \co V\times \Sigma_g\rightarrow V\times\Sigma_g$.
			Moreover, if $\eta$ is overtwisted and $g$ is large enough, $\xi_g$ is also overtwisted.
		\end{enumerate} 
	\end{thm}

	By an induction on the dimension, 
	Theorem \ref{ThmExamplesOBDBourgConstr} gives, for any integer $n\geq2$, examples of $(\cercle\times W^{2n},\xi)$ whose first Chern class satisfies the desired conditions.
	As far as point \ref{Item2PropExamplesOBDBourgConstr} is concerned, the reader can consult Geiges \cite{Gei97} for a construction and Gironella \cite{MyPaper17b} for a definition of \emph{contact branched coverings}.
	We also point out that the optimal integer $g$ to guarantee overtwistedness of $\eta_g$ 
	is actually $2$, according to an observation due to Massot and Niederkr\"uger (see Gironella \cite[Observation 5.10]{MyPaper17b}).
	
	Using the h-principle of Borman, Eliashberg and Murphy \cite{BorEliMur15}, an even bigger class of (non-explicit) examples can be obtained:
	\begin{prop}
		\label{PropExamplesHPrinciple}
		Consider a closed connected manifold $W^{2n}$ which is almost complex, spin and satisfies $\cohomol{1}{W}\neq \{0\}$.
		Then, there is a co-orientable overtwisted contact structure $\xi$ on $V\coloneqq \cercle\times W$ such that $c_1(\xi)$ 
		is toroidal  
		and $c_1(\pi_k^*\xi)=k \cdot c_1(\xi) \bmod \cohomolVator$, where $\pi_k \co \cercle_s\times W\rightarrow  \cercle_s\times W$ is given by $\pi_k(s,p)=(ks, p)$.
	\end{prop}

	\paragraph{Outline}
	Section \ref{ProofThmExNonTrivContMappClass} contains a proof by contradiction of Theorem \ref{ThmExNonTrivContMappClass}. Assuming that the contactomorphism $f$ is contact-isotopic to the identity, we construct a	contactomorphism between two contact structures $\xi_1$ and $\xi_2$; on the other hand, the hypothesis on the first Chern class of $\xi$ implies that $\xi_1$ and $\xi_2$ are not even isomorphic as almost contact structures.
	
	Section \ref{SecExFromLiouPairsHalfLutzMoriTwists} shows how to obtain examples of contact manifolds $(\cercle\times W^{2n},\xi)$ satisfying the hypothesis of Theorem \ref{ThmExNonTrivContMappClass} starting from Massot, Niederkr\"uger and Wendl \cite{MNW13}. \\
	More precisely, Section \ref{SubSecLutzMoriTwist} and \ref{SubSecConstrLiouvillePair} recall, respectively, the definition of half Lutz-Mori twist and the explicit constructions of Liouville pairs, both from Massot, Niederkr\"{u}ger and Wendl \cite{MNW13}.
	Then Section \ref{SubSecHalfLutzMoriTwistChernClasses} 
	describes the effects of a half Lutz-Mori twist on Chern classes in this context and Section \ref{SubSecProofPropExplicitConstr} contains a proof of Theorem \ref{ThmExplicitConstr}.
	
	Finally, in Section \ref{SecExFromHPrinciple} we show how to get examples of contactomorphisms smoothly isotopic but not contact-isotopic to the identity using the existence of adapted open book decompositions proven by Giroux \cite{Gir02} and the h-principle of Borman, Eliashberg and Murphy \cite{BorEliMur15}.
	More precisely, Theorem \ref{ThmExamplesOBDBourgConstr} and Proposition \ref{PropExamplesHPrinciple} are proven in Sections \ref{SubSecProofPropHPrinciple} and \ref{SubSecProofPropOBDBourgConstr} respectively.

	\section*{Acknowledgements}
	\phantomsection
	\addcontentsline{toc}{section}{Acknowledgements}	  
	
	This work is part of my PhD thesis \cite{MyPhDThesis}, written at Centre de mathématiques Laurent Schwartz of École polytechnique (Palaiseau, France).
	
	I thank my PhD advisor P Massot for introducing me to the problem, for sharing with me useful points of view and for the many comments that greatly improved this work.
	
	I also show my gratitude to A Oancea, who first asked whether the examples in Theorem \ref{ThmExplicitConstr}, originally only studied in dimension $3$, could be generalized to all dimensions, and to S Courte and E Murphy for very interesting questions, remarks and discussions about the rigid or flexible nature of the results in this manuscript. 
	
	Lastly, I would like to thank the anonymous referee for pointing out that
	the original main result (as it was stated in the previous version of this preprint, arXiv:1704.07278v2) on non-trivial contact mapping classes on manifolds obtained using constructions in Massot, Niederkr\"uger and Wendl \cite{MNW13} could actually be split as Theorems \ref{ThmExNonTrivContMappClass} and \ref{ThmExplicitConstr} above, and for suggesting that other examples satisfying Theorem  \ref{ThmExNonTrivContMappClass} could also be obtained as in Theorem \ref{ThmExamplesOBDBourgConstr} and Proposition \ref{PropExamplesHPrinciple}.

	\section{Proof of Theorem \ref{ThmExNonTrivContMappClass}} 
	\label{ProofThmExNonTrivContMappClass}	
	
	As each contactomorphism gives in particular an isomorphism of the underlying almost contact structures, Theorem \ref{ThmExNonTrivContMappClass} directly follows from the two following lemmas:
	\begin{lemma}
		\label{Lemma1ProofMainThm}
		Let $(\cercle \times W^{2n},\xi)$ be a contact manifold, with $\xi$ co-orientable.
		For each natural $k\geq2$, denote by $\pi_k\co \cercle \times W\rightarrow \cercle \times W $ the $k-$fold cover $\pi_k(s,p)=(ks,p)$ and by $f\co (\cercle \times W,\pi_k^*\xi)\rightarrow(\cercle \times W,\pi_k^*\xi)$ the contactomorphism $f(s,p)=(s+\frac{2\pi}{k},p)$. \\
		If $f$ is contact-isotopic to the identity, then there is a contactomorphism 
		$$\phi:(\cercle \times W,\pi_{kN}^*\xi)\overset{\sim}{\longrightarrow} (\cercle \times W,\pi_{kN+1}^*\xi) \text{ .}$$
	\end{lemma}
	\begin{lemma}
		\label{Lemma2ProofMainThm}
		Let $(V\coloneqq\cercle \times W,\xi)$, $\pi_k$ and $f$ be as in Lemma \ref{Lemma1ProofMainThm}.
		If moreover 
		$c_1(\xi)$ is toroidal 
		and $c_1(\pi_m^*\xi)=m\cdot c_1(\xi) \bmod \cohomolVator$ for every natural $m\geq2$, then $\pi_{m}^*\xi$ and $\pi_{m+1}^*\xi$ are not isomorphic as almost contact structures. 
	\end{lemma}
	
	We now prove Lemmas \ref{Lemma1ProofMainThm} and \ref{Lemma2ProofMainThm} above.
	
	\begin{proof}[Proof (Lemma \ref{Lemma1ProofMainThm})]
		In order to find the desired contactomorphism $\phi$, we use an idea  that already appeared in Geiges and Gonzalo Perez \cite{GeiGon04} and in Marinkovi\'c and Pabiniak \cite{MarPab16}, and which consists in cutting off contact hamiltonians on a particular cover of the manifold we are working with. 
		
		By hypothesis, the contactomorphism $f:(\cercle \times W,\pi_k^*\xi)\rightarrow(\cercle \times W,\pi_k^*\xi)$ defined by $f(s,p) = (s+\frac{2\pi}{k},p)$ is contact isotopic to the identity. Call $(F_r)_{r\in[0,1]}$ the isotopy, so that $F_0=\id$, $F_1 = f$ and $F_r$ is a contactomorphism for all $r\in[0,1]$.
		
		Take now the universal cover $\real_s$ of the factor $\cercle_s$ of the manifold $\cercle_s\times W$. Then, pull back $\pi_k^*\xi$ to a contact structure $\eta_k$ on the covering $\real_s\times W$ of $\cercle_s\times W$ and lift the contact isotopy $F_r$ to a contact isotopy $\lineF_r$ of $(\real_s\times W,\eta_k)$ starting at the identity. Fix a certain contact form $\beta_k$ for $\eta_k$ and denote by $\lineH_r\co \real_s\times W \rightarrow\real$ the path of contact hamiltonians $\beta_k(Y_r)$ associated to the contact vector field $Y_r$ generating the isotopy $\lineF_r$ (see for instance Geiges \cite[Section 2.3]{Gei08} for more details on contact hamiltonians).
		
		Now, by compactness of $W$ and $[0,1]$, there is an $N>0$ such that, for each $r\in[0,1]$, $\lineF_r(\{0\}_{s}\times W)$ is contained in $ \left(-2\left(N-1\right)\pi,+\infty\right)_s\times W$.
		
		Consider then an $\epsilon>0$ very small and a smooth function $\rho:\real\rightarrow\real$ such that $\rho(x)=0$ for $x<-2N\pi + \epsilon$ and $\rho(x)=1$ for $x>-2 \left(N-1\right) \pi-\epsilon$. We can then construct a new contact hamiltonian: $\lineHp_r(s,p):=\rho(s)\cdot \lineH_r(s,p)$, for all $(s,p)\in \real_s\times W$.
		
		We claim that the contact vector field $\lineXp_r$ associated to this new hamiltonian $\lineHp_r$ (i.e. the unique contact vector field $\lineXp_r$ such that $ \beta_k(\lineXp_{r})=\lineHp_r$; see for instance \cite[Section 2.3]{Gei08}) can be integrated to a contact isotopy $\left(\lineFp_r\right)_{r\in\left[0,1\right]}$ of $(\real_s\times W,\eta_k)$ starting at the identity. 
		Indeed, $\lineXp_r$ is zero for $s<-2N\pi + \epsilon$ and equal to the contact field $\lineX_r$ 
		for $s>-2 \left(N-1\right) \pi-\epsilon$, which means in particular that it is integrable outside of a compact set of $\real_s\times W$ (remark that $\lineX_r$ is trivially integrable, because it comes from a contact isotopy); this implies integrability on all $\real\times W$. 
		Moreover, $\lineFp_r\vert_{\{0\}\times W}=\lineF_r\vert_{\{0\}\times W}$ and $\lineFp_r\vert_{\{-2N\pi\}\times W}=\Id\vert_{\{-2N\pi\}\times W}$ for all $r\in[0,1]$.
		
		In particular, $\lineFp_1$ maps $[-2 N\pi,0]\times W$ contactomorphically to $[-2 N \pi,\frac{2\pi}{k}]\times W$, where we consider on the domain and on the codomain the contact structure $\eta_k$. 
		
		Now, by the periodicity of $\eta_k$, we can identify the two boundary components of $[-2 N \pi,0]\times W$ so that the restriction of $\eta_k$ induces a well defined contact structure on the quotient. 
		More precisely, the quotient contact manifold obtained is $(\cercle_s\times W,\pi_{kN}^*\xi)$. \\ 
		The analogous procedure for the codomain $[-2 N \pi,\frac{2\pi}{k}]\times W$ of $\lineFp_1$ gives as quotient the contact manifold $(\cercle_s\times W,\pi_{kN+1}^*\xi)$.
		
		Lastly, because $\lineFp_1: [-2 N \pi,0]\times W \rightarrow [-2 N \pi,\frac{2\pi}{k}]\times W$ is the identity on a neighborhood of $\{-2N\pi\}\times W$ and a lift of the translation $f$ on a neighborhood of $\{0\}\times W$, it induces on the quotient contact manifolds a well defined contactomorphism
		\begin{equation*}
		\phi:(\cercle_s\times W,\pi_{kN}^*\xi)\overset{\sim}{\longrightarrow} (\cercle_s\times W,\pi_{kN+1}^*\xi) \text{ .} \qedhere
		\end{equation*}
	\end{proof}
	
	\begin{proof}[Proof (Lemma \ref{Lemma2ProofMainThm})]
		Suppose by contradiction that there is an isomorphism of almost contact structures $\psi: \left(V, \pi_{m}^*\xi\right) \overset{\sim}{\rightarrow} \left(V, \pi_{m+1}^*\xi\right)$;
		in particular, 
		\begin{equation}
		\label{Eq0Lemma2ProofMainThm}
		\psi_*\chern_1(\pi_{m}^*\xi)=\chern_1(\pi_{m+1}^*\xi) \text{ .}
		\end{equation}
		Because the submodule $\cohomolVator$ of atoroidal classes is natural (i.e. it is preserved by pullbacks induced by continuous maps $V\rightarrow V$), 
		$\psi_*$ induces a well defined endomorphism, which is moreover an isomorphism, of the $\integ$-module $N\coloneqq\faktor{\cohomolTwoV}{\cohomolVator}$.
		We then have $\psi_*(\pi_{n}^*\xi)=n \psi_*\chern_1(\xi) \bmod \cohomolVator$ for each natural $n\geq 2$, so that
		Equation \ref{Eq0Lemma2ProofMainThm} becomes 
		\begin{equation}
		\label{Eq1Lemma2ProofMainThm}
		m \psi_* c_1(\xi) = (m+1)c_1(\xi) \bmod \cohomolVator \text{ .}
		\end{equation}
		Notice also that $N$ is a finitely generated $\integ$-module without torsion.
		In particular, there is a well defined \emph{divisibility} map 
		\begin{align*}
		d \co N\setminus \{0\} &\rightarrow \nat\setminus\{0\} \\
		a & \mapsto \max\{\,k\in\nat\, \vert\, \exists b\in N,\; a=k b\, \}
		\end{align*}
		which also satisfies $d(ha)=hd(a)$ and $d(\psi_*a)=d(a)$, for each $a\in N\setminus \{0\}$ and $h\in\nat\setminus\{0\}$.
		Because $c_1(\xi)$ is toroidal, we can then apply $d$ to both the left and right hand sides of Equation \ref{Eq1Lemma2ProofMainThm}, thus obtaining the desired contradiction. 
	\end{proof}

	\section{Examples from Liouville pairs and half Lutz-Mori twists}
	\label{SecExFromLiouPairsHalfLutzMoriTwists}
	
	The idea of the proof of Theorem \ref{ThmExplicitConstr} is the following. 
	The contact structure $\eta$ on the manifold $V=\cercle\times W$ in the statement has trivial Chern classes (better, it is trivializable as complex bundle). 
	We then apply a semi-local modification to $\eta$ and obtain another contact structure $\xi$;
	the explicit nature of this modification (as well as the explicit nature of the original contact manifold $(V,\eta)$) allows us to compute the first Chern class of $\xi$, and to show that it satisfies the desired conditions. 
	
	This section is structured in the following way.
	We recall in Sections \ref{SubSecLutzMoriTwist} and \ref{SubSecConstrLiouvillePair}, respectively, the notion of half Lutz-Mori twist and the construction of Liouville pairs, both from Massot, Niederkr\"uger and Wendl \cite{MNW13}.
	We then describe in Section \ref{SubSecHalfLutzMoriTwistChernClasses} how half Lutz-Mori twists (along contact submanifolds belonging to one of the Liouville pairs constructed in \cite{MNW13}) affect the Chern classes of the underlying almost contact structure.
	Finally, Section \ref{SubSecProofPropExplicitConstr} contains the proof of Theorem \ref{ThmExplicitConstr}.
	
	\subsection{The half Lutz-Mori twist}
	\label{SubSecLutzMoriTwist}
	
	Developing some ideas introduced by Mori in \cite{Mor09} in the 5-dimensional case,  Massot, Niederkr\"{u}ger and Wendl introduce in \cite{MNW13} the notion of \emph{Lutz-Mori twist} along a submanifold belonging to a \emph{Liouville pair} as a generalization of the known $3$-dimensional Lutz twists. 
	In this section, we briefly recall how to perform the \emph{half} version of the Lutz-Mori twist, which we will use in the following. 
	
	We start by recalling the notion of Liouville pair:
	\begin{definition}{\cite{MNW13}}
		Let $M^{2n-1}$ be an oriented manifold. A \emph{Liouville pair on $M$} is a couple of contact forms $(\alphaplus,\alphaminus)$ such that $\pm \alpha_{\pm}\wedge\left(d\alpha_{\pm}\right)^{n-1}>0$ and such that the form $e^{r}\alphaplus+ e^{-r}\alphaminus$ is a Liouville form (i.e. its differential is symplectic) on $\real_{r}\times M$. 
	\end{definition}
	
	We point out that the existence of Liouville pairs on closed manifolds is not trivial; at the moment, the only known examples in high dimension are given by the construction in \cite[Section $8$]{MNW13}, which is nonetheless a source of infinitely many non-homeomorphic manifolds with Liouville pairs in each (odd) dimension. 
	In Section \ref{SubSecConstrLiouvillePair} we will recall the properties of this construction which are needed in order to prove Theorem \ref{ThmExNonTrivContMappClass}.\\
	
	Let now $(V,\eta)$ be a contact manifold having as a codimension-2 contact submanifold $(M,\xiplus)$ such that $\alphaplus$ defining $\xiplus$ belongs to a Liouville pair $(\alphaplus,\alphaminus)$. 
	We want to describe how to perform a half Lutz-Mori twist on $(V,\eta)$ along $(M,\xiplus)$.
	
	Consider then the $1$-form $\alpha = \frac{1+\cos\left(s\right)}{2}\,\alphaplus + \frac{1-\cos\left(s\right)}{2}\,\alphaminus + \sin\left(s\right)dt$ on $[\pi,2\pi]_{s}\times\cercle_t\times M$; notice that this is a contact form because $(\alphaplus,\alphaminus)$ is a Liouville pair on $M$.
	Let then $(U,\xi_U)$ be the \emph{blow-down} of $\left([\pi,2\pi]_{s}\times\cercle_t\times M,\ker\alpha\right)$ along $\{\pi\}\times\cercle_t\times M$, as defined in \cite[Section 5.1]{MNW13}.\\
	More explicitly, $(U,\xi_U)$ is obtained as follows.  
	\sloppy{The hypersurface (or, better, \emph{round hypersurface}, as defined in \cite[Section 5.1]{MNW13}) $\{\pi\}\times\cercle_t\times M$ admits a neighborhood of the form $\left([0,\epsilon)_x\times\cercle_t\times M, \ker \left(\alpha_{-}+x dt\right)\right)$ inside $\left([\pi,2\pi]_{s}\times\cercle_t\times M,\ker\alpha\right)$, in such a way that $\{\pi\}_s\times\cercle_t\times M$ corresponds to $\{0\}_x\times\cercle_t\times M$; this follows from the fact that the restriction of the two contact structures to the two hypersurfaces coincide (see \cite[Lemma 5.1]{MNW13}).
	We can then remove the hypersurface 
	$\{\pi\}_s\times\cercle_t\times M$ inside 
	$\left([\pi,2\pi]_{s}\times\cercle_t\times M,\ker\alpha\right)$ and glue $\left(D^2_{\sqrt{\epsilon}}\times M, \ker \left(\alpha_{-}+r^2 d\varphi\right)\right)$ (here $(r,\varphi)$ are polar coordinates on the $2$-disc $D^2_{\sqrt{\epsilon}}$ centered at the origin and of radius $\sqrt{\epsilon}$) thanks to the contactomorphism from  $\left(\left(D^2_{\sqrt{\epsilon}}\setminus\{0\}\right)\times M, \ker \left(\alpha_{-}+r^2 d\varphi\right)\right)$ to  $\left((0,\epsilon)_x\times\cercle_t\times M, \ker \left(\alpha_{-}+x dt\right)\right)$ (seen as a subset of $\left((\pi,2\pi]_{s}\times\cercle_t\times M,\ker\alpha\right)$) given by $(r,\varphi,p) \mapsto (r^2,\varphi,p)$.
	The resulting contact manifold (with one boundary component) is the desired $(U,\xi_U)$.}

	At this point, performing a half Lutz-Mori twist along $(M,\xiplus)$ means replacing a neighborhood of $(M,\xiplus)$ in $(V,\eta)$ with $(U,\xi_U)$.\\
	More precisely, one can see that the boundary component $\{2\pi\}\times\cercle_t\times M$ of $(U,\xi_U)$ also admits a neighborhood $\left((-\epsilon,0]_x\times\cercle_t\times M, \ker \left(\alpha_{+}+x dt\right)\right)$ inside $(U,\xi_U)$, in such a way that $\{2\pi\}_s\times\cercle_t\times M$ corresponds to $\{0\}_x\times\cercle_t\times M$.
	Now, $(M,\xi_+)$ is a codimension $2$ contact submanifold with trivial normal bundle in $(V,\eta)$; hence, by the contact neighborhood theorem (see Geiges \cite[Theorem 2.5.15]{Gei08}), there is $\delta>0$ such that $(M,\xiplus)$ admits a neighborhood $\left(D^2_\delta \times M,\eta_0\coloneqq\ker\left(\alpha_+ + r^2 d\varphi\right)\right)$ inside $(V,\eta)$ (here, $(r,\varphi)$ are polar coordinates on $D^2_\delta$), in such a way that $(M,\xi_+)$ corresponds to $(\{0\}\times M,\eta_0\vert_{\{0\}\times M})$.
	Because \sloppy{$\left(\left(D^2_\delta\setminus\{0\}\right) \times M, \ker\left(\alpha_+ + r^2 d\varphi\right)\right)$} is contactomorphic to $\left(\left(0,\delta^2\right)_x\times\cercle_t \times M,\ker\left(\alpha_+ + x dt\right)\right)$ via $(r,\varphi,p)\mapsto(r^2,\varphi,p)$, we can then glue $(U,\xi_U)$ to $(V\setminus M,\eta)$ and obtain a well defined contact manifold $(V,\xi)$ (notice that the underlying smooth manifold is still $V$).
	
	The above construction does not depend, up to isotopy, on any choice made.
	
	\begin{definition}{\cite[Remark 9.6]{MNW13}} 
		\label{DefHalfLutzMoriTwist}
		$(V,\xi)$ is said to be obtained from $(V,\eta)$ by a \emph{half Lutz-Mori twist} along the contact submanifold $\left(M,\xiplus=\ker\left(\alphaplus\right)\right)$ belonging to the Liouville pair $(\alphaplus,\alphaminus)$.
	\end{definition}	 
	
	We point out that performing a half Lutz-Mori twist makes the contact manifold overtwisted. 
	Indeed, it is explained in Massot, Niederkr\"{u}ger and Wendl \cite[Remark 9.6]{MNW13} that this twist always gives a PS-overtwisted manifold, which then is also overtwisted according to Casals, Murphy and Presas \cite{CMP15} and Huang \cite{Hua16}.

	\subsection{Construction of Liouville pairs}
	\label{SubSecConstrLiouvillePair}
	
	We recall here the construction in Massot, Niederkr\"{u}ger and Wendl \cite[Section 8]{MNW13}, leaving the details that are not important for our purposes.
	
	Consider the product manifold $\realm\times\realmplusone$ with the pair of contact structures $\xiplus$, $\ximinus$ induced by the following pair of contact forms:
	$$ \alpha_{\pm} \, := \, \pm e^{t_1 + \ldots + t_m} d\theta_0 + e^{-t_1} d\theta_1 + \ldots + e^{-t_m} d\theta_m \text{ ,}$$ 
	where we use coordinates $(t_1,\ldots,t_m)$ on $\realm$ and $(\theta_0,\ldots,\theta_m)$ on $\realmplusone$. A direct computation shows that $(\alphaplus,\alphaminus)$ is a Liouville pair on $\realm\times\realmplusone$.
	
	We now remark that there are two Lie groups acting explicitly on $\realm\times\realmplusone$ by strict contact transformations for both $\alphaplus$ and $\alphaminus$.\\ 
	Indeed, the left action of the group $\realmplusone$ on $\realm\times\realmplusone$ given by the translations 
	\begin{equation*}
	\begin{multlined}
	(\varphi_0,\ldots,\varphi_m)\cdot(t_1,\ldots,t_m,\theta_0,\ldots,\theta_m)  := (t_1,\ldots,t_m ,\theta_0+\varphi_0,\ldots,\theta_m + \varphi_m)
	\end{multlined}	
	\end{equation*}		
	and the left action of $\realm$ given by the law 
	\begin{equation*}	
	\begin{multlined}
	(\tau_1,\ldots,\tau_m)\cdot(t_1,\ldots,t_m,\theta_0,\ldots,\theta_m) \, {} \\ {} := \, (t_1 + \tau_1,\ldots,t_m + \tau_m ,e^{-\tau_1 + \ldots -\tau_m}\theta_0, e^{\tau_1}\theta_1,\ldots,e^{\tau_m}\theta_m )
	\end{multlined}
	\end{equation*}		
	are Lie group left-actions on $\realm\times\realmplusone$ and they both preserve the contact forms $\alphaplus$ and $\alphaminus$.
	
	Moreover, these two actions allow us to produce a \emph{compact} contact manifold from $\realm\times\realmplusone$.
	Indeed, there are lattices $\Lambda,\Lambda'$ of $\realm$ and $\realmplusone$ respectively, such that the $\Lambda$-action on $\realm\times\realmplusone$ induced by the action of $\realm$ preserves $\realm\times\Lambda'$. 
	This implies that, by first taking the quotient of $\realm\times\realmplusone$ by $\Lambda'$ and then quotienting it by the (well defined by the above property) induced action of $\Lambda$, we obtain a compact manifold $M$. 
	
	Finally, this manifold $M$ naturally inherits a Liouville pair, still denoted by $(\alphaplus,\alphaminus)$, from the Liouville pair on the covering $\realm\times\realmplusone$, because $\realm$ and $\realmplusone$ act on $\realm\times\realmplusone$ by strict contactomorphisms for both $\alphaplus$ and $\alphaminus$.
	
	We point out that this construction actually gives an infinite number of non homeomorphic manifolds $M$, hence an infinite number of non isomorphic Liouville pairs, in each odd dimension bigger or equal to $3$. \\
	Indeed, the existence of the lattices $\Lambda$ and $\Lambda'$ follows from number theory arguments and the manifold $M$ obtained depends on the choice of a totally real field of real numbers $\fieldk$ with finite dimension over $\rat$. 
	Now, for each dimension $\geq 2$ over $\rat$, there are infinitely such fields $\fieldk$ and the corresponding manifolds are non homeomorphic. 
	See \cite[Lemma 8.3]{MNW13} for the details.\\
	As far as Theorem \ref{ThmExplicitConstr} is concerned, this means that we have, in each odd dimension $2n+1\geq5$, a contact structure satisfying the hypothesis of Theorem \ref{ThmExNonTrivContMappClass} on infinitely many different smooth manifolds $\torus\times M^{2n-1}$; in dimension $3$, we obtain one contact structure on $\torus\times M^1=\mathbb{T}^3$.
	In both cases, Theorem \ref{ThmExNonTrivContMappClass} then gives examples of contactomorphisms smoothly isotopic but not contact isotopic to the identity for the countably many contact structures $\left(\pi_k^*\xi\right)_{k\geq2}$ on each $\torus\times M$.
	
	\subsection{Effects of half Lutz-Mori twists on Chern classes}
	\label{SubSecHalfLutzMoriTwistChernClasses}
	
	Chern classes are global invariants of complex vector bundles $E$ over a manifold $V$. 
	In our setting, we then have to find a way to study how local modifications (i.e. over an open set $\U$ of $V$) of the complex vector bundle $E$ affect its Chern classes. 
	The solution is either to use a relative version of Chern classes or to shift to another point of view more local in nature.
	
	Aguilar, Cisneros-Molina and Fr\'\i as-Armenta \cite{ACF07} adopt in particular this second strategy and this allows them to prove a generalization of the classical fact that the top Chern class of $E$ is the Poincaré dual of the zero locus of a section of $E$ which is transverse to the zero section. 
	In order to achieve such generalization, they deal with the following technical issue:  when $1<k\leq r=\rang_\compl(E)$, the locus $S_k$ of points where $k$-sections $s_1,\ldots,s_k$ are $\compl$-linearly dependent may not be a smooth submanifold of $V$, even for a ``generic'' choice of $s_1,\ldots,s_k$, hence it has a priori no well defined homology class. 
	In \cite{ACF07} it is hence proved that $S_k$ can be desingularized to a smooth submanifold $Z_k$ of $V\times\CP^{k-1}$ in such a way that the $(r-k+1)$-th Chern class of $E$ can be interpreted as the Poincaré dual of the pushforward in $V$ of the class of $Z_k\subset V\times\CP^{k-1}$ via the map induced in homology by the projection $V\times\CP^{k-1}\rightarrow V$.

	In our context of half Lutz-Mori twists along particular contact submanifolds, the results proven by Aguilar, Cisneros-Molina and Fr\'\i as-Armenta \cite{ACF07} give the following:
	\begin{prop}
		\label{PropCompChernClass}
		Let $(V^{2m+3},\xi)$ be a contact manifold containing the $(M^{2m+1},\xiplus)$ of Section \ref{SubSecConstrLiouvillePair} as a codimension $2$ contact submanifold with trivial normal bundle. Then, if we denote by $\xi'$ the contact structure on $V$ obtained by performing a half Lutz-Mori twist along the submanifold $(M,\xiplus)$ (where we consider $M$ with the orientation given by $\xiplus$), we have the following:
		\begin{enumerate}
			\item \label{Item1PropCompChenClass} for all $i=2,\ldots, m+1$, $\chern_i(\xi') - \chern_i(\xi) = 0$ in $\cohomolV{2i}$;
			\item \label{Item2PropCompChenClass} $\chern_1(\xi') - \chern_1(\xi) = - 2 \PD\left(j_\ast \left[M\right]\right)$ in $\cohomolTwoV$, where $ j: M \rightarrow V $ is the inclusion, $j_{\ast}: \homolM{2m+1}\rightarrow\homolV{2m+1}$ is the induced map and $\PD(\alpha)$ denotes the Poincaré dual of the homology class $\alpha\in\homolV{\ast}$.
		\end{enumerate}
	\end{prop}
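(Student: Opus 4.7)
The plan is to use the geometric interpretation of Chern classes as Poincaré duals of degeneracy loci of generic collections of sections, following the outline of Section \ref{SecGeomInterpChernClasses}. Since $\xi$ and $\xi'$ agree as complex vector bundles on $V \setminus (M \times \diskdeltaclosed)$, any collections of sections of $\xi$ and $\xi'$ that coincide on a neighborhood of this complement produce identical contributions to Chern classes outside $M \times \diskdeltaclosed$. Thus the difference $\chern_i(\xi') - \chern_i(\xi)$ is Poincaré dual to a relative cycle supported inside the tubular neighborhood $M \times \diskdeltaclosed$ of $M$ in $V$, which, by the Thom isomorphism for the trivial normal disk bundle, is of the form $j_\ast\beta$ for some $\beta \in \homolM{2m+3-2i}$.

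For item (\ref{Item1PropCompChenClass}), where $i \geq 2$, I would take $m-i+2$ generic sections of $\xi$ and the corresponding sections of $\xi'$ which agree near $\partial(M \times \diskdeltaclosed)$, and compare their degeneracy loci inside $M \times \diskdeltaclosed$. The local-modification tools developed in Section \ref{SubSecChernClassesPoincDuals} should produce, from the explicit description of the half Lutz-Mori twist, a cobordism rel boundary between these two degeneracy loci inside $M \times \diskdeltaclosed$: intuitively, the twist only nontrivially rotates the direction spanned by $\xiplus$, $\ximinus$, $dt$, so having $m-i+2$ sections (with $i \geq 2$, enough to ignore a single $\compl$-direction) produces no net Chern contribution. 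This will give $\beta = 0$.

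For item (\ref{Item2PropCompChenClass}) the codimension is only $2$, so one cannot cobord away the difference and $\beta$ will be a non-trivial multiple $c\cdot[M]$. I would reduce to determinant line bundles, using $\chern_1(\xi') - \chern_1(\xi) = \chern_1(\det\xi') - \chern_1(\det\xi)$: these line bundles coincide on $V \setminus (M \times \diskdeltaclosed)$, so their difference is captured by a clutching function on $M \times \partial\diskdeltaclosed$. The integer $c$ is then the winding number of this clutching along a fibre circle $\{p\} \times \partial\diskdeltaclosed$, which can be computed using the explicit form $\alpha_0 = \tfrac{1+\cos s}{2}\alphaplus + \tfrac{1-\cos s}{2}\alphaminus + \sin(s)\,dt$ with $s=\psi(r)$ for $\xi$ and $s=\phi(r)$ for $\xi'$, together with the fact that the complex structure on $\xi$ is induced by $d\alpha_0$. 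This should yield $c = -2$.

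The main obstacle is this fibrewise winding-number computation. It requires an explicit complex trivialization of $\det\xi$ and $\det\xi'$ in the coordinates of Section \ref{SubSecLutzMoriTwist}, together with a careful tracking of how the clutching function rotates as $r$ ranges over $\partial\diskdeltaclosed$. The coefficient $2$ is expected because the half-turn in $s$ interpolates $\alphaplus$ to $\alphaminus$ \emph{and} sweeps through a nontrivial $dt$-component: the complex structure on $\xi$ combines these two motions, so that the determinant line bundle undergoes a full rotation by $-2\pi$ along the fibre circle, while the sign will depend on the chosen orientation conventions for $M$ and $V$.
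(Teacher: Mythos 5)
Your framework is sound and your opening step --- localizing $c_i(\xi')-c_i(\xi)$ to $M\times\diskdeltaclosed$ via the Thom isomorphism so that the difference has the form $\PD(j_*\beta)$ --- is exactly the right point of view. You then split into two separate mechanisms, whereas the paper handles both items with a single lemma, Proposition \ref{PropChernClassesRelat}. For item (\ref{Item1PropCompChenClass}) you propose a cobordism-rel-boundary between degeneracy loci, but the paper does something simpler: it chooses the $m$ everywhere-$\compl$-independent sections $s_1=\partial_{t_1},\ldots,s_m=\partial_{t_m}$ of $\xi\vert_{M\times\diskeps}$, which are literally the \emph{same} sections of $\xi'$ (the twist leaves $\partial_{t_i}$ tangent to both), trivializing a common rank-$m$ subbundle $F=F'$; for $c_i$ with $i\geq 2$ one uses $m-i+2\leq m$ of these, so the degeneracy locus over the twist region is empty and no cobordism is needed. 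For item (\ref{Item2PropCompChenClass}) your reduction to $\det\xi$ and a fibrewise clutching/winding number is a genuinely different but valid route; the paper instead stays in the same degeneracy-locus framework and computes that the extra section $s_{m+1}=r\partial_r$ (respectively $s'_{m+1}$), read in the complex trivialization $(s_1,\ldots,s_m,S)$, has last coordinate $v_{m+1}=x+iy$ for $\xi$ but $v'_{m+1}=x-iy$ for $\xi'$, so the two degeneracy loci are $M\times\{0\}$ with opposite orientations and $\PD([Z'])-\PD([Z])=-2\PD(j_*[M])$. Translated into your determinant picture, the two sections of the determinant vanish at the core with indices $+1$ and $-1$, so the clutching winding is $-2$. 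Beware, though, that your heuristic for the $2$ is not quite the real mechanism: the $-2$ is a \emph{difference} of two indices $(-1)-(+1)$, and the sign flip of the second index comes from the orientation reversal at the core (the twist replaces $\xiplus$ with $\ximinus$, which orients $M$ oppositely), not from summing contributions of the $\alphaplus\leftrightarrow\alphaminus$ interpolation and the $dt$-sweep. Finally, you correctly identify the explicit complex trivialization of $\xi,\xi'$ over $M\times\diskeps$ as the remaining obstacle; constructing the sections $s_i,r_i,S,R$ and the tamed complex structures $J,J'$ from the explicit Liouville-pair coordinates is the bulk of the paper's Section \ref{ProofPropCompChernClass}, and your approach would need it just as much.
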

	
	\begin{remark*}
		This result is not in contradiction with Massot, Niederkr\"{u}ger and Wendl \cite[Theorem 9.5]{MNW13}, where the authors prove that the contact structures before and after a full Lutz-Mori twist (as defined in \cite[Section 9.1]{MNW13}) are homotopic through almost contact structures, hence have the same Chern classes.\\
		Indeed, the result $\xi''$ of a full Lutz-Mori twist can be interpreted as a couple of successive half twists. More precisely, we first perform a half twist along a submanifold $(M,\xiplus)$ to obtain $\xi'$; this changes the core of the tube where we perform the twist from $(M,\xiplus)$ to $(M,\ximinus)$. We then perform another half twist, this time along the new core $(M,\ximinus)$, to obtain $\xi''$. Hence, applying Proposition \ref{PropCompChernClass} twice and using the fact that $\ximinus$ induces an orientation that is opposite to that induced by $\xiplus$, we get that $\chern_i(\xi'')=\chern_i(\xi')=\chern_i(\xi)$ for all $i=2,\ldots, m+1$ and that $\chern_1(\xi'') = \chern_1(\xi') - 2 \PD\left(j_\ast \left[-M\right]\right) = \chern_1(\xi) - 2 \PD\left(j_\ast \left[M\right]\right)- 2 \PD\left(j_\ast \left[-M\right]\right) = \chern_1(\xi)$, as we expected from \cite[Theorem 9.5]{MNW13}.
	\end{remark*}

	The proof of 
	Proposition \ref{PropCompChernClass} relies on the 
	explicit results in \cite{ACF07};
	we hence made the choice to omit it in this paper, in order to avoid lengthy technical digressions and keep the focus on the  motivating contact geometric problem, i.e. the research of examples of contactomorphisms smoothly isotopic but not contact isotopic to the identity on overtwisted contact manifolds of high dimensions. 
	A detailed proof of Proposition \ref{PropCompChernClass} (together with the necessary background from \cite{ACF07}) can be found in Gironella \cite[Section 4.2.3 and Appendix A]{MyPhDThesis}.

	\subsection{Proof of Theorem \ref{ThmExplicitConstr}}
	\label{SubSecProofPropExplicitConstr}
	
	We use in this section the notations introduced in the statement of Theorem \ref{ThmExplicitConstr}. 
	
	The contact structure $\eta$ on the manifold $\torus_{(s,t)}\times M$ can be explicitly written as the kernel of $\alpha\coloneqq \sum_{i=1}^{m}e^{-t_i}d\theta_i+\cos(s) \, e^{\sum_{i=1}^{m}t_i}d\theta_0 + \sin(s)dt$, where we use locally on $M$ the coordinates $(t_1,\ldots,t_m,\theta_0,\ldots,\theta_m)$  induced by the covering $\real^m\times\real^{m+1}\rightarrow M$, as described in Section \ref{SubSecConstrLiouvillePair}.
	Then, $\eta$ admits a trivialization as a complex vector bundle given by the following sections and choice of $d\alpha\vert_\eta$-compatible complex structure $J$:
	\begin{enumerate}
		\item $S_i:=\partialti$ for $i=1,\ldots, m$, and $S_{m+1}\coloneqq \partial_s$
		\item \sloppy{$J(S_i):= e^{-\sum_{j=1}^{m}t_j}\cos\left(s\right)\partialthetazero - e^{t_i}\partialthetai + \sin\left(s\right) \partial_t$, for $i=1,\ldots,m$, and ${J(S_{m+1})\coloneqq -e^{-\sum_{j=1}^{m}t_j}\sin(s)\partial_{\theta_{0}} + \cos(s)\partial_t}$.}
	\end{enumerate}
	(An explicit computation shows that these sections are indeed well defined on $\torus_{(s,t)}\times M$ and not only on $\torus_{(s,t)}\times \real^{m}\times\real^{m+1}$).\\
	In particular, all the Chern classes of $\eta$ are zero.
	Hence, applying Proposition \ref{PropCompChernClass} to the couple $(\xi,\eta)$ we get the following: if we denote by $ j: M \rightarrow \torus_{(s,t)}\times M$ the inclusion $j(p) = (0,0,p)$ and by $j_{\ast}: \homolM{2m+1}\rightarrow\homol{2m+1}{\torus\times M\,}$ the induced map in homology, then $\chern_1(\xi) = - 2 \PD\left(j_\ast \left[M\right]\right)$ in $\cohomol{2}{\torus\times M\,}$.
	
	We now prove that $c_1(\xi)$ is toroidal.
	Fix a $p\in M$ and consider $f\co \torus\rightarrow \torus\times M$ given by $f(\theta,\varphi)=(\theta,\varphi,p)$, for every $(\theta,\varphi)\in \torus$.
	Because $f$ is transverse to $j(M)$, we have $f^*\PD_{\torus\times M} \left(j_\ast \left[M\right]\right) = \PD_{\torus}\left(\left[f^{-1}(j(M))\right]\right)$; 
	here, the notation $\PD_X$ means that we are considering the Poincaré duality on the compact manifold $X$. 
	Now, $\PD_{\torus}\left(\left[f^{-1}(j(M))\right]\right)=\PD_{\torus}\left(\left[\{(0,0)\}\right]\right)$ generates $\cohomol{2}{\torus}\simeq \integ$; in other words, $\PD \left(j_\ast \left[M\right]\right)$ is toroidal. 
	As $\faktor{\cohomolTwoV}{\cohomolVator}$ is torsion-free, $c_1(\xi)$ is also toroidal.
	
	The only thing left to show is that $c_1(\pi_k^*\xi)=kc_1(\xi)\bmod\cohomolVator$ for each $k\geq 2$.
	\\
	Because $\eta$ is a trivial complex vector bundle over $\torus\times M$, the same is true for each $\pi_k^*\eta$; in particular, each $\pi_k^*\eta$ has trivial Chern classes. 
	Notice that $\pi_k^*\xi$ can also be seen as obtained from $\pi_k^*\eta$ by performing a half Lutz-Mori twist along each of the $k$ submanifolds $\left\{\left(\frac{2l\pi}{k},0\right)\right\}\times M$, with $l=0,\ldots,k-1$. 
	Then, Proposition \ref{PropCompChernClass} tells that $c_1(\pi^*_k\xi) = - 2 k \PD\left(j_\ast \left[M\right]\right) = k c_1(\xi)$, so that $c_1(\pi^*_k\xi) = k c_1(\xi)\bmod\cohomolVator$ too.
	
	\section{Examples from adapted open books and the h-principle}
	\label{SecExFromHPrinciple}
	
	In this section, we show how to obtain examples of $(\cercle\times W,\xi)$ as in the hypothesis of Theorem \ref{ThmExNonTrivContMappClass} using the existence of adapted open book decompositions due to Giroux \cite{Gir02} and the h-principle of Borman, Eliashberg and Murphy \cite{BorEliMur15}. 
	
	In the following, we are going to adopt two (homotopically equivalent) points of view on (co-orientable) almost contact structures on $V^{2n+1}$.
	More precisely, in Sections \ref{SubSecProofPropOBDBourgConstr} and \ref{SubSecProofPropHPrinciple} we look at them as, respectively, couples $(\xi,\omega_\xi)$ and $(\xi,J_\xi)$, where $\xi$ is a co-orientable hyperplane field on $V$, $\omega_\xi$ is a symplectic structure on $\xi$ and $J_\xi$ is a complex structure on it.

	\subsection{Proof of Theorem \ref{ThmExamplesOBDBourgConstr}}
	\label{SubSecProofPropOBDBourgConstr}
	
	In order to prove Theorem \ref{ThmExamplesOBDBourgConstr}, we need the following lemma which describes the effects of the Bourgeois construction \cite{Bou02} and of its branched coverings at the level of almost contact structures as well as a sufficient condition for overtwistedness in the case of branched covers:
	\begin{lemma}
		\label{LemmaBourgContStr1}
		Let $(V^{2n-1},\eta)$ be a contact manifold, where $\eta$ is co-orientable, $(B,\varphi)$ an open book decomposition supporting $\eta$ and $\alpha$ a contact form defining $\eta$ and adapted to the open book.
		Then, we have the following:
		\begin{enumerate}
			\item \label{Item1LemmaBourgContStr1} The Bourgeois construction \cite{Bou02} on $(V,\eta)$ and $(B,\varphi,\alpha)$ gives a contact structure $\xi$ on $V\times\torus$ which is homotopic, as an almost contact structure, to $\left(\eta\oplus T\torus,d\alpha\oplus\omega_{T}\right)$, where $\omega_{T}$ is a volume form on $\torus$.
			\item \label{Item2LemmaBourgContStr1} Any contact branched covering $\xi_g$ of $\xi$ via a branched covering $\nu\co V\times\Sigma_g\rightarrow V\times\torus$, induced by a covering $\Sigma_g\rightarrow\torus$ branched over two points, is homotopic, as an almost contact structure, to $\left(\eta\oplus T\Sigma_g,d\alpha\oplus\omega_g\right)$, where $\omega_g$ is a volume form on $\Sigma_g$.
			\item \label{Item3LemmaBourgContStr1} Suppose $\eta$ is overtwisted. Then, if $g$ is large enough, $\xi_g$ is overtwisted too.
		\end{enumerate}
	\end{lemma}
	Notice that point \ref{Item1LemmaBourgContStr1} above has already been pointed out by Lisi, Marinkovi\'c and Niederkr\"uger \cite[Remark 2.1]{LisMarNie18}.
	\\
	We now prove, in this order, Theorem \ref{ThmExamplesOBDBourgConstr} and Lemma \ref{LemmaBourgContStr1}:
	
	\begin{proof}[Proof (Theorem \ref{ThmExamplesOBDBourgConstr})]
		We use the notations of Theorem \ref{ThmExamplesOBDBourgConstr}. 
		Denote also the natural projections by
		\begin{equation*}
		p\co V\times\torus\rightarrow V \text{ , } \;
		p_g\co V\times\Sigma_g\rightarrow V \, \text{ and } \;
		p_g'\co V\times\Sigma_g\rightarrow\Sigma_g \text{ .}
		\end{equation*}
		Points \ref{Item1LemmaBourgContStr1} and \ref{Item2LemmaBourgContStr1} of Lemma \ref{LemmaBourgContStr1} imply that $c_1(\xi)=p^*c_1(\eta)$ and $c_1(\xi_g)=p_g^*c_1(\eta) + (p_g')^*\chern_1(T\Sigma_g)$.
		Recall now that every continuous map from $\torus$ to $\Sigma_g$ has degree $0$ (here, we use $g\geq 2$); in particular, for each $f\co \torus \rightarrow V\times\Sigma_g$, we have
		\begin{equation*}
		f^*(p_g')^*c_1(T\Sigma_g)=(p_g'\circ f)^* c_1(T\Sigma_g)=0 \in \cohomol{2}{\torus} \text{ ,}
		\end{equation*}
		i.e. $(p_g')^*\chern_1(T\Sigma_g)$ is atoroidal.
		We hence have that 
		\begin{equation}
		\label{EqProofThm3}
		\begin{split}
		&c_1(\xi)=p^*c_1(\eta)\bmod \cohomolVtorusator \, , \\
		&c_1(\xi_g)=p_g^*c_1(\eta)\bmod \cohomolVSigmagator \text{ .}
		\end{split}
		\end{equation}
		
		We now claim that both $p$ and $p_g$ pull-back toroidal classes on $V$ to toroidal classes on, respectively, $V\times\torus$ and $V\times\Sigma_g$. By Equation \ref{EqProofThm3} and the fact that $c_1(\eta)$ is toroidal by hypothesis, this would then directly imply that $c_1(\xi)$ and $c_1(\xi_g)$ are toroidal too. 
		\\
		Let $a\in\cohomolV{2}$ be toroidal, i.e. there is $t\co \torus\rightarrow V$ with $t^*a\neq 0$; we then want to prove that $p^*a\in \cohomol{2}{V\times\torus}$ is toroidal too. 
		Consider any $h\co \torus \rightarrow V\times\torus$ such that $p\circ h = t$; for instance, let $q_0\in\torus$ and take $h(.)\coloneqq(t(.),q_0)$. Then,
		\begin{equation*}
		h^*(p^*a)=(p\circ h)^*a=t^*a\neq 0 \in \cohomol{2}{\torus} \text{ ,}
		\end{equation*}
		i.e. $p^*a$ is toroidal, as desired.
		An analogous argument shows that $p_g^*a$ is toroidal too.
		
		The fact that $\xi$ and $\xi_g$ satisfy
		\begin{align*}
		&c_1(\mu_k^*\xi)=kc_1(\xi)\bmod \cohomolVtorusator \text{ ,}\\
		&c_1((\mu_k^g)^*\xi_g)=kc_1(\xi_g)\bmod \cohomolVSigmagator
		\end{align*}
		follows, by a direct computation, from Equation \ref{EqProofThm3}, from the equalities $\pi_k\circ p = p\circ \mu_k$, $\pi_k\circ p_g = p_g\circ \mu_k^g$ and from the fact that $c_1(\pi_k^*\eta)=kc_1(\eta)\bmod \cohomolVator$.
		
		Lastly, if $\eta$ is overtwisted, point \ref{Item3LemmaBourgContStr1} of Lemma \ref{LemmaBourgContStr1} gives the overtwistedness of $\xi_g$ for $g$ large enough, thus concluding the proof. 
	\end{proof}
	
	\begin{proof}[Proof (Lemma \ref{LemmaBourgContStr1})]
		We start by proving point \ref{Item1LemmaBourgContStr1}.
		The Bourgeois construction \cite{Bou02} on $(V,\eta)$ and $(B,\varphi,\alpha)$ gives a function $\Phi=(f,g)\co V\rightarrow \real^2$ defining the open book $(B,\varphi)$ and such that $\xi$ on $V\times\torus_{(x,y)}$ is defined by $\beta\coloneqq \alpha + f dx - gdy$.
		Then, an explicit homotopy of almost contact structures from $(\xi,d\beta\vert_{\xi})$ to $(\eta\oplus T\torus,d\alpha\vert_{\eta}+dx\wedge dy)$ is given by the $[0,1]_t$-family of hyperplane fields $\xi_t$ given by the kernel of $\alpha + \left(1-t\right)\left(fdx-gdy\right)$, together with the symplectic structures given by the restriction of $d\alpha + (1-t) \left[df\wedge dx - dg\wedge dy\right] + tdx\wedge dy$ to $\xi_t$.
		
		As far as point \ref{Item2LemmaBourgContStr1} is concerned, as explained in Geiges \cite{Gei97}, an explicit contact branched covering $\xi_g$ on $V\times\Sigma_g$ is given by the kernel of a differential $1-$form $\nu^*\beta + \epsilon h(r)r^2d\theta$;
		here, $(r,\theta)$ are radial coordinates on 
		the $D^2$-factor of a neighborhood $D^2\times\{p,q\}$ of 
		the upstairs branching locus $\{p,q\}$ of the branched covering $\Sigma_g\rightarrow\torus$, $\epsilon>0$ is very small and $h=h(r)$ is a smooth function with support in
		$D^2\times\{p,q\}$,
		equal to $1$ on the branching locus and strictly decreasing in $r$.
		As contact branched coverings are unique up to isotopy (see Gironella \cite[Section 2.2]{MyPaper17b}), it's enough to prove that this specific $\eta_g$ is homotopic to the desired almost contact structure.
		\\
		Now, an explicit computation (analogous to the one in \cite[Section 6.5]{MyPaper17b}) shows that the desired homotopy of almost contact structures is given by the $[0,1]_t$-family of hyperplane fields $\xi_g^t$ defined as the kernel of $\nu^*\alpha + \left(1-t\right)\left[\nu^*\left(fdx - gdy\right) + \epsilon h r^2 d\theta\right]$, together with the symplectic structures given by the restriction of $\nu^*d\alpha+\left(1-t\right)\left[\nu^*\left(df\wedge dx - dg\wedge dy\right) + \epsilon d\left(h r^2\right)\wedge d\theta\right] + t \omega_g$ to $\xi_g^t$.
		
		Point \ref{Item3LemmaBourgContStr1} has already been discussed in \cite[Section 7.2]{MyPaper17b}; more precisely, it essentially follows from the following three facts. 
		Firstly, the contact branched covering $\xi_g$ can be chosen (up to isotopy) in such a way that it induces on each fiber of $V\times\Sigma_g\rightarrow\Sigma_g$ the original overtwisted contact structure $\eta$. 
		Secondly, Niederkr\"uger and Presas \cite[page 724]{NiePre10} describe how the ``size'' of a contact neighborhood of each connected component  $(V,\xi)$ of the branching set of $V\times\Sigma_g\rightarrow V\times\torus$ 
		is diverging to $+\infty$ as the index $g$ of the branched covering is going to $+\infty$; see also \cite[Lemma 7.10]{MyPaper17b}.
		Then, according to Casals, Murphy and Presas \cite[Theorem 3.1]{CMP15}, topologically trivial contact neighborhoods of overtwisted manifolds in codimension $2$ are themselves overtwisted, provided they are sufficiently ``large''. This concludes the proof of Lemma \ref{LemmaBourgContStr1}.
	\end{proof}

	\subsection{Proof of Proposition \ref{PropExamplesHPrinciple}}
	\label{SubSecProofPropHPrinciple}
	
	The proof is structured as follows. 
	We start from a natural almost contact structure $\eta_0$ on $V\coloneqq \cercle \times W$ and we modify it to an almost contact structure $\eta$ with first Chern class $c_1(\eta)$ satisfying the desired conditions.
	Then, the h-principle from Borman, Eliashberg and Murphy \cite{BorEliMur15} tells that $\eta$ can be deformed to an overtwisted contact structure $\xi$ on $V$; the first Chern class of such a $\xi$ will then satisfy the desired properties too.
	
	Before entering in the details of the proof of Proposition \ref{PropExamplesHPrinciple}, we state a lemma from algebraic topology, whose proof is postponed:
	\begin{lemma}
		\label{LemmaAlmContStr1}
		Let $\eta_0$ be a (coorientable) almost contact structure on $V^{2n+1}$.
		For each $u\in\cohomol{2}{V}$, there is an almost contact structure $\eta_u$ on $V$ with $c_1(\eta_u)= c_1(\eta_0)+2u$.
	\end{lemma}

	\begin{proof}[Proof (Proposition \ref{PropExamplesHPrinciple})]
		The hyperplane field $\eta_0=\{0\}\oplus TW$ on $V=\cercle\times W$ is a (coorientable) almost contact structure thanks to the almost complex structure $J_W$ on $W$.
		Moreover, its first Chern class $c_1(\eta_0)$ is equal to $\pi_W^*c_1(W)$, where $\pi_W\co \cercle\times W \rightarrow W$ is the projection on the second factor.
		\\
		The hypothesis that $W$ is spin means that the $2$nd Stiefel Whitney class $w_2(W)\in H^{2}(W;\integ_{2})$ of $W$ is trivial. 
		Because $w_2(W)$ is the reduction modulo $2$ of $c_1(W)$, there is $\lambda \in \cohomol{2}{W}$ such that $c_1(W)=2\lambda$. 
		Hence, $c_1(\eta_0)=\pi_W^*c_1(W)=2\pi_W^*\lambda$.
		
		Consider then a non-trivial $c\in\cohomol{1}{W}\neq\{0\}$, 
		and let $v$ be a generator of $\cohomol{1}{\cercle}$. 
		Using Kunneth's decomposition theorem, we can see $\cohomol{1}{\cercle}\otimes\cohomol{1}{W}$ as a submodule of $\cohomol{2}{\cercle\times W}$.
		An application of Lemma \ref{LemmaAlmContStr1} with $u=v\otimes c - \pi_W^*\lambda$ then gives an almost contact structure $\eta$  with $c_1(\eta)= 2 v\otimes c$.
		
		Notice that the map $\pi_k^*$, induced on $\cohomol{2}{\cercle\times W}$ by $\pi_k$, acts as multiplication by $k$ on the submodule $\cohomol{1}{\cercle}\otimes\cohomol{1}{W}$ of $\cohomol{2}{\cercle\times W}$.
		In particular, the fact that $c_1(\eta)= 2 v\otimes c$ implies that $c_1(\pi_k^*\eta) = k c_1(\eta)\bmod\cohomolVator$.

		We also claim that $c_1(\eta)$ is toroidal.
		Indeed, according to the universal coefficient theorem and the Hurewicz theorem, $\cohomol{1}{W}\simeq \Hom_{\integ}\left(\homol{1}{W};\integ\right)\simeq \Hom_{\integ}\left(\pi_1(W);\integ\right)$; 
		in particular, as $c\neq 0 \in \cohomol{1}{W}$, there is $\gamma\co \cercle\rightarrow W$ such that $\gamma^*c\neq 0 \in \cohomol{1}{\cercle}$.
		If we define $f\coloneqq(\Id,\gamma)\co \torus=\cercle\times\cercle\rightarrow\cercle\times W$, we then have $f^{*}c_1(\eta)=2 v \otimes \gamma^*c\neq 0$ in $\cohomol{1}{\cercle}\otimes\cohomol{1}{\cercle}\subset \cohomol{2}{\torus}$, i.e. $c_1(\eta)$ is toroidal, as desired.
		
		The h-principle from Borman, Eliashberg and Murphy \cite{BorEliMur15} then gives the desired contact structure $\xi$ as deformation of $\eta$. 
	\end{proof}

	We now give a proof of the lemma used above:
	\begin{proof}[Proof (Lemma \ref{LemmaAlmContStr1})]
		Bowden, Crowley and Stipsicz \cite[Lemma 2.17.(1)]{BowCroSti14} states that if $V$ is a closed connected manifold of dimension $2n+1$ and $\zeta$ is a stable almost complex structure on it, then there is an almost contact structure $\eta$ on $V$ whose stabilization gives $\zeta$. 
		Recall that a \emph{stable almost complex structure} on $V$ is the stable isomorphism class of a complex structure on $TV\oplus \varepsilon_V^k$, where $\varepsilon_V$ is the trivial real vector bundle of dimension $1$ over $V$,
		and the \emph{stabilization of $\eta$} is the stable isomorphism class of the  complex structure induced by $\eta$ on $TV\oplus \varepsilon_V$.
		In particular, in order to prove Lemma \ref{LemmaAlmContStr1}, it's enough to find a stable almost complex structure $\zeta_u$ such that $c_1(\zeta_u)=c_1(\eta_0)+2u$. 
		
		The existence of such a $\zeta_u$ follows, for instance, from Geiges \cite[Remark 8.1.4]{Gei08}, of which we recall here the idea.
		\\
		There is a bijective correspondence, given by the first Chern class, between isomorphism classes of complex line bundles over $V$ and cohomology classes in $\cohomol{2}{V}$. 
		Let then $L_u$ be the complex line bundle over $V$ satisfying $c_1(L_u)=u$. 
		Consider then a complex vector bundle $E_u$ over $V$ such that there are $m\in \nat_{>0}$ and an isomorphism $\nu\colon L^*_u\oplus_\compl E_u\simeq (\varepsilon_V^\compl)^m$ of complex vector bundles over $V$, where $\varepsilon_V^\compl$ denotes the  
		complexification of $\varepsilon_V$; for a proof of the existence of such a complement $E_u$ see for instance Atiyah \cite[Corollary 1.4.14]{Ati89}.
		We then claim that the complex vector bundle $F_u\coloneqq\eta_0\oplus L_u \oplus E_u$ can be used to define the desired stable complex structure.
		\\
		The fact that $L^*_u\oplus_\compl E_u$ is a trivial complex vector bundle implies in particular that  $c_1(E_u)=-c_1(L^*_u)=u$; hence, $c_1(F_u)=c_1(\eta)+u+u = c_1(\eta)+2u$.\\
		Now, because $L^*_u$ and $L_u$ are isomorphic as real vector bundles, $\nu$ induces an isomorphism of real vector bundles $\nu'\colon L_u\oplus_{\real} E_u\simeq \varepsilon_V^{2m}$. 
		Moreover, the choice of a vector field $X$ on $V$ transverse to $\eta_0$  
		gives an isomorphism of real vector bundles $\Psi\colon \eta_0 \oplus\varepsilon_V\simeq TV$.
		We then have an isomorphism $\theta$ of real vector bundles over $V$ given by the composition
		\begin{equation*}
		F_u = \eta_0\oplus L_u \oplus E_u \overset{\Id\oplus\nu'}{\simeq} \eta_0 \oplus \varepsilon^{2m}_V=_\real \left(\eta_0 \oplus \varepsilon_V\right)\oplus\varepsilon^{2m-1}_V \overset{\Psi\oplus\Id}{\simeq} TV \oplus\varepsilon^{2m-1}_V \text{ .}
		\end{equation*} 
		In particular, the pushforward $\theta_* J$ of the complex structure $J$ on $F_u$ via $\theta$ gives the desired stable almost complex structure $\zeta_u$ on $V$. 
	\end{proof}

	\bibliographystyle{gtart}

	\Addresses

\end{document}